\listfiles
\pdfoutput=1
\documentclass[letterpaper,11pt]{amsart}

\usepackage[utf8]{inputenc}
\usepackage[T1]{fontenc}
\usepackage[english]{babel}

\usepackage{graphicx}

\usepackage{amsmath}
\usepackage{amsfonts}
\usepackage{mathrsfs}
\usepackage{mathtools}


\usepackage[top=3.5cm,bottom=3.5cm,right=3cm,left=3cm]{geometry}

\usepackage{kpfonts}
\usepackage{microtype}

\usepackage[style=danish,danish=guillemets]{csquotes}

\usepackage{enumitem}

\setenumerate{label=(\alph*)}



\newif\ifarXiv
\arXivtrue
\ifarXiv \else

\usepackage[
  style=numeric,
  arxiv=abs,
  backend=biber]{biblatex}

\addbibresource{rv-utf8.bib}

\renewbibmacro*{doi+eprint+url}{%
  \iftoggle{bbx:doi}
  {%
    \iffieldundef{doi}
    {}
    {%
      \begingroup
      \edef\URLorDOI{%
        \detokenize{http://dx.doi.org/}%
        \thefield{doi}%
      }%
      \iffieldequals{url}{\URLorDOI}
      {\endgroup}
      {%
        \endgroup
        \printfield{doi}%
      }%
    }%
  }
  {}%
  \newunit\newblock
  \iftoggle{bbx:eprint}
  {\usebibmacro{eprint}}
  {}%
  \newunit\newblock
  \iftoggle{bbx:url}
  {\usebibmacro{url+urldate}}
  {}}

\fi


\usepackage{url}

\usepackage[colorlinks=true]{hyperref}

\newtheorem{theorem}{Theorem}[section]
\newtheorem{lemma}[theorem]{Lemma}
\newtheorem{proposition}[theorem]{Proposition}

\newcommand{\mathsetfont}{\mathbb}
\newcommand{\DeclareMathSet}[1]{%
  \expandafter\newcommand\csname set#1\endcsname{\mathsetfont{#1}}}

\DeclareMathSet{C} 
\DeclareMathSet{R} 
\DeclareMathSet{Z} 
\DeclareMathSet{N} 

\DeclareMathOperator{\spn}{span}
\DeclareMathOperator{\id}{id}

\DeclareMathOperator{\Hom}{Hom}
\newcommand{\ab}{_{\mathrm{ab}}}

\newcommand{\boundary}{\partial}
\newcommand{\inv}{^{-1}}
\newcommand{\isom}{\mathrel{\cong}}
\newcommand{\braid}[2]{\tau_{#1}\tau_{#2}\tau_{#1}}
\newcommand{\braidrel}[2]{\braid{#1}{#2} = \braid{#2}{#1}}

\newcommand{\Twist}[1]{\tau_{#1}}
\newcommand{\Ta}[1]{\Twist{\alpha_{#1}}}

\newcommand{\Tb}[1]{\Twist{\beta_{#1}}}
\newcommand{\Tg}[1]{\Twist{\gamma_{#1}}}

\DeclarePairedDelimiterX{\Set}[2]{\lbrace}{\rbrace}{%
  #1 \;\; \vert \;\; #2}

\DeclarePairedDelimiter{\prt}{\lparen}{\rparen}

\let\phi\varphi
\let\epsilon\varepsilon

\author{Rasmus Villemoes}
\address{Centre for Quantum Geometry of Moduli Spaces\\ %
  Ny Munkegade 118, bldg. 1530\\ %
  Aarhus University\\ %
  8000 Aarhus C\\ %
  Denmark}
\curraddr{Dept. of Mathematics\\ %
  University of Maryland \\ %
  College Park, MD 20742-4015 \\ %
  United States}
\email{math@rasmusvillemoes.dk}
\thanks{Supported by QGM (Centre for Quantum Geometry of Moduli
    Spaces, funded by the Danish National Research Foundation)}
\date{\today}

\title{A computation of $H^1(\Gamma, H_1(\Sigma))$}

\begin{document}

\begin{abstract}
  Let $\Sigma = \Sigma_{g,1}$ be a compact surface of genus $g\geq 3$
  with one boundary component, $\Gamma$ its mapping class group and $M
  = H_1(\Sigma, \setZ)$ the first integral homology of $\Sigma$. Using
  that $\Gamma$ is generated by the Dehn twists in a collection of
  $2g+1$ simple closed curves (Humphries' generators) and simple
  relations between these twists, we prove that $H^1(\Gamma, M)$ is
  either trivial or isomorphic to $\setZ$. Using Wajnryb's
  presentation for $\Gamma$ in terms of the Humphries generators we
  can show that it is not trivial.
\end{abstract}

\maketitle

\section{Group cohomology in 45 seconds}
\label{sec:group-cohomology}

For $G$ a group and $M$ a (left) $G$-module (a module over the group
ring $\setZ G$), a cocycle is a map $u\colon G\to M$ satisfying the
\emph{cocycle condition},
\begin{equation}
  \label{eq:17}
  u(gh) = u(g) + gu(h),
\end{equation}
for all $g,h\in G$. The set of $M$-valued cocycles on $G$ is denoted
$Z^1(G, M)$. A coboundary is a cocycle of the form $g\mapsto
m-gm$ for some $m\in M$; the set of these is denoted $B^1(G, M)$. The
cohomology group $H^1(G, M)$ is the quotient $Z^1(G, M)/B^1(G, M)$.

It follows immediately from~\eqref{eq:17} that $u(e) = 0$ for $e$ the
identity element of $G$. It also follows that $u(g\inv) = -g\inv
u(g)$, and that
\begin{equation}
  \label{eq:31}
  u(ghg\inv) = (1-ghg\inv)u(g) + g u(h).
\end{equation}

We also note that $u$ is determined by its values on a set of
generators of $G$. Indeed, if $G = \langle g_1, \ldots, g_r \mid r_1,
\ldots, r_s\rangle$ is a finite presentation of $G$, the space of
cocycles $Z^1(G, M)$ may be identified with the subspace of $M^r$
determined by the $s$ linear equations in the $r$ unknowns $m_1 =
u(g_1), \ldots, m_r = u(g_r)$ given by the relations $r_j$ obtained by
expanding via the cocycle condition.  For example, the ($\setZ
G$-)linear equation associated to the relation $g_1g_2g_3\inv g_1 = e$
is
\begin{equation*}
  (1+g_1g_2g_3\inv)m_1 + g_1m_2 - g_1g_2g_3\inv m_3  = 0.
\end{equation*}
In the same setting, $B^1(G, M)$ may be identified with the subspace
\begin{equation*}
  \Set[\big]{ \prt[\big]{ (1-g_1)m, \ldots, (1-g_r)m } }{ m\in M } \subseteq M^r.
\end{equation*}

If the action of $G$ on $M$ is trivial, a cocycle is simply a group
homomorphism $G\to M$, and since any coboundary vanishes in this case,
we have
\begin{equation*}
  H^1(G, M) = \Hom(G, M) = \Hom(G\ab, M)
\end{equation*}
where $G\ab$ denote the abelianization of $G$.

\subsection{Exact sequences}
\label{sec:exact-sequences}

Suppose $1\to K\to G\to Q\to 1$ is a short exact squence of groups and
$M$ a $G$-module. The action of $G$ on $K$ by conjugation induces an
action on the cohomology group $H^1(K, M)$. There is an exact sequence
\begin{equation}
  \label{eq:30}
  0 \to H^1(Q, M^K) \to H^1(G, M) \to H^1(K, M)^G,
\end{equation}
where $M^K$ denote the subspace of $M$ invariant under $K$, and
$H^1(K, M)^G$ is the subspace invariant under the above-mentioned
action of $G$. This is a consequence of the Hochschild-Serre spectral
sequence, but can also be checked by direct verification using the
hands-on definitions of cocycles and coboundaries given above.

\section{Notation and conventions}
\label{sec:notation-conventions}

We consider a compact, oriented surface $\Sigma$ of genus
$g\geq 1$ with one boundary component. 

\subsection{Curves and homology}
\label{sec:curves-homology}

Fix a collection $\mathcal{C}$ of $3g-1$ simple, closed curves
$\alpha_j, \beta_j, \gamma_j$ as shown in
Figure~\ref{fig:curves}. With appropriate choices of orientations
(which we also fix), the homology classes $a_j = [\alpha_j]$, $b_j =
[\beta_j]$, $c_j = [\gamma_j]$ satisfy
\begin{subequations}
  \label{eq:19}
  \begin{align}
    \label{eq:1}
    \omega(a_j, a_k) = \omega(b_j, b_k) &= 0 \\
    \label{eq:2}
    \omega(a_j, b_k) &= \delta_{jk} \\
    \label{eq:3}
    c_j &= a_{j-1} - a_j
  \end{align}
\end{subequations}
where $\omega$ denotes the intersection pairing on $M = H_1(\Sigma,
\setZ)$. In particular, $S = (a_1, b_1, \ldots, a_g, b_g)$ is a
symplectic basis for $M$. We let $\mathcal{S}$ denote the subset of
$\mathcal{C}$ consisting of the $2g$ curves $\alpha_j, \beta_j$. There
are involutions on the sets $\mathcal{S}$ and $S$ given by $\alpha_j
\leftrightarrow \beta_j$ and $a_j \leftrightarrow b_j$, respectively;
we will use~$\iota$ to stand for either of these involutions. Clearly
$\iota[\eta] = [\iota \eta]$ for any $\eta\in \mathcal{S}$.

We will use the notation $M_j$ for the symplectic subspace
$\spn_\setZ(a_j, b_j)$ of $M$, and $M_j'$ for its complement
$\spn(S-\{a_j, b_j\})$. Associated to these are the projections
$\pi_j$ and $\pi_j' = \id-\pi_j$.

Using $\gamma_1$ and $c_1$ as synonyms for $\alpha_1$ and $a_1$,
respectively, we observe that $S' = (c_1, b_1, c_2, b_2, \ldots, c_g,
b_g)$ also constitutes a basis for $M$; this is immediate
from~\eqref{eq:3}. We use $\mathcal{S}'$ to denote the set
$\{\gamma_1, \beta_1, \ldots, \gamma_g, \beta_g\}\subset \mathcal{C}$.

\begin{figure}[ht]
  \centering
  \includegraphics{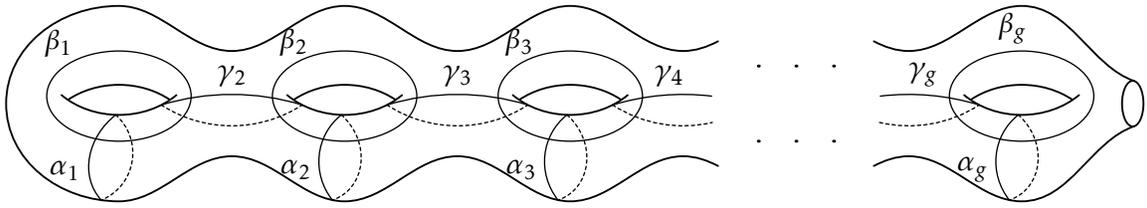}
  \caption{A collection of simple closed curves on $\Sigma$.}
  \label{fig:curves}
\end{figure}

\subsection{Twists and action}
\label{sec:twists-action}

It is well-known that the action of the (left) Dehn
twist in a simple closed curve $\eta$ on a homology element $m$ is
given by
\begin{align}
  \label{eq:4}
  \tau_\eta m = m + \omega(m, [\vec\eta])[\vec\eta],
\end{align}
where $\vec\eta$ denotes any of the oriented versions of $\eta$ (see
e.g.~\cite{MCGprimer}). For convenience, we record these consequences:
\begin{subequations}
  \label{eq:8}
  \begin{align}
    \label{eq:5}
    \tau_{\alpha_j} a_k &= a_k &
    \tau_{\beta_j} b_k &= b_k \\
    \label{eq:6}
    \tau_{\alpha_j} b_k &= b_k &
    \tau_{\beta_j} a_k &= a_k \\
    \label{eq:7}
    \tau_{\alpha_j} b_j &= b_j - a_j &
    \tau_{\beta_j} a_j &= a_j + b_j
  \end{align}
\end{subequations}
Here,~\eqref{eq:5} holds for any $1\leq j,k\leq g$, while~\eqref{eq:6}
holds for $j\not= k$. For $1 < j \leq g$ we also have
\begin{subequations}
  \label{eq:22}
  \begin{align}
    \label{eq:9}
    \tau_{\gamma_j} b_j &= b_j + a_{j-1} - a_j
                     = b_j + c_j &
    \tau_{\gamma_j} b_{j-1} &= b_{j-1} - a_{j-1} + a_j 
                        = b_{j-1} - c_j         \\
    \label{eq:10}
    \tau_{\gamma_j} a_k &= a_k \\
    \label{eq:11}
    \tau_{\gamma_j} b_k &= b_k, 
  \end{align}
\end{subequations}
with~\eqref{eq:10} holding for any $k$, and~\eqref{eq:11} for $k \not=
j-1,j$.

\subsection{Wajnryb's presentation for the mapping class group}
\label{sec:wajnrybs-pres-mapp}

The curves in $\mathcal{C}$ are not completely arbitrary; the twists
in these curves are the so-called Lickorish generators for the mapping
class group. Humphries~\cite{MR547453} showed that the $2g+1$ twists
in $\mathcal{S}' \cup \{\alpha_2\}$ actually suffice (and that $2g+1$
is the minimal number of twists needed to generate the mapping class
group). Later, Wajnryb~\cite{MR719117} was able to give a finite
presentation of the mapping glass group using Humphries generators.
An exposition of this, along with more details on the history of
genarating and presenting the mapping class group, can be found
in~\cite{MCGprimer}. We give a slightly modified version of their
Theorem~5.3.
\begin{theorem}
  \label{thm:2}
  The mapping class group $\Gamma_{g,1}$ has a presentation with a
  generator $g_\eta$ for each curve $\eta\in \mathcal{S}\cup
  \{\alpha_2\} = \{\gamma_1, \beta_1, \gamma_2, \beta_2, \ldots,
  \gamma_g, \beta_g, \alpha_2\}$, and relations:
  \begin{enumerate}
  \item If $\eta$ and $\lambda$ are disjoint, $g_\eta$ and $g_\lambda$ commute.
  \item If $\eta$ and $\lambda$ intersect in exactly one point,
    $g_\eta g_\lambda g_\eta = g_\lambda g_\eta g_\lambda$.
  \item Let $w$ denote the word $g_{\beta_2} g_{\gamma_2} g_{\beta_1} g_{\gamma_1}
    g_{\gamma_1} g_{\beta_1} g_{\gamma_2} g_{\beta_2}$. Then
    \begin{equation}
      \label{eq:25}
      (g_{\gamma_1} g_{\beta_1} g_{\gamma_2})^4 = g_{\alpha_2} w
      g_{\alpha_2} w\inv.
    \end{equation}
  \item Let
    \begin{align*}
      w_1 &= g_{\beta_2} g_{\gamma_3} g_{\gamma_2} g_{\beta_2} &
      x_1 &= w_1\inv g_{\alpha_2} w_1 \\
      w_2 &= g_{\beta_1} g_{\gamma_2} g_{\gamma_1} g_{\beta_1} &
      x_2 &= w_2\inv x_1 w_2 \\
      w_3 &= g_{\beta_3} g_{\gamma_3} &
      x_3 &= w_3\inv x_1 w_3
      \shortintertext{and}
      w_4 &= g_{\beta_3} g_{\gamma_3} g_{\beta_2} g_{\gamma_2}
      g_{\beta_1} x_3 g_{\gamma_1}\inv g_{\beta_1}\inv g_{\gamma_2}\inv
      g_{\beta_2}\inv &
      x_4 &= w_4 g_{\alpha_2} w_4\inv.
    \end{align*}
    Then
    \begin{align}
      \label{eq:28}
      g_{\alpha_2} x_2 x_1 = g_{\gamma_1} g_{\gamma_2} g_{\gamma_3} x_4.
    \end{align}
  \end{enumerate}  
\end{theorem}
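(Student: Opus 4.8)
The plan is to deduce this presentation from the one recorded as Theorem~5.3 in \cite{MCGprimer}, i.e.\ Wajnryb's presentation \cite{MR719117}; I would not attempt to reprove Wajnryb's theorem from scratch. That presentation is phrased in terms of Dehn twists about a fixed \emph{chain} of simple closed curves together with one extra curve, so the differences between it and the statement above are a matter of relabelling the curves and rewriting the conjugating words in our notation. Accordingly, the first step is to set up an explicit dictionary identifying the chain curves used in \cite{MCGprimer} with the curves $\gamma_1, \beta_1, \gamma_2, \beta_2, \ldots, \gamma_g, \beta_g$ together with $\alpha_2$, under which each generator $g_\eta$ corresponds to the Dehn twist $\tau_\eta$. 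Once this correspondence is fixed it suffices to check, one family at a time, that the four families of relations in Theorem~\ref{thm:2} coincide with the four families there.

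Relations~(1) and~(2) require only the geometric intersection pattern of the curves in $\mathcal{C}$: two twists commute exactly when the curves are disjoint and satisfy the braid relation exactly when the curves meet in a single point. So I would read off from Figure~\ref{fig:curves} that consecutive curves in the chain $(\gamma_1, \beta_1, \gamma_2, \beta_2, \ldots, \gamma_g, \beta_g)$ meet once while all remaining pairs among $\mathcal{S}' \cup \{\alpha_2\}$ are disjoint, and record where $\alpha_2$ sits relative to this chain. Relation~\eqref{eq:25} I would recognize as the three-chain relation applied to the subchain $(\gamma_1, \beta_1, \gamma_2)$: a regular neighborhood of these three curves is a subsurface of genus one with two boundary components, whose boundary curves support the twists on the right-hand side. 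The content is that $g_{\alpha_2}$ is the twist about one boundary curve while $w\, g_{\alpha_2} w\inv$ is the twist about the other, the word $w$ being chosen so that the mapping class it represents carries $\alpha_2$ onto that second boundary curve.

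Relation~\eqref{eq:28} I would identify as a lantern relation carried by an embedded four-holed sphere in $\Sigma$: after rearranging it reads $g_{\gamma_1} g_{\gamma_2} g_{\gamma_3} x_4 = g_{\alpha_2} x_2 x_1$, expressing the product of the twists about the four boundary curves as the product of the twists about the three interior curves. Here $\gamma_1, \gamma_2, \gamma_3$ are three of the boundary curves, $x_4 = w_4 g_{\alpha_2} w_4\inv$ is the twist about the fourth, and $x_1, x_2$ are the twists about the two interior curves other than $\alpha_2$; each auxiliary word $w_1, \ldots, w_4$ is a product of braid generators whose role is to move $\alpha_2$ onto the corresponding curve of the lantern. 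The verification amounts to computing the images of $\alpha_2$ under the mapping classes represented by $w_1$, $w_2 w_1$, $w_3$, $w_4$ (and the conjugates $x_i$) and checking they land on the intended curves.

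The main obstacle is precisely this last bookkeeping: confirming that the explicit conjugating words $w, w_1, \ldots, w_4$, which look opaque as written, really do realize the geometric moves claimed, namely that as mapping classes they carry $\alpha_2$ to the designated boundary or interior curve of the relevant three-chain or lantern configuration. I would carry this out by tracking the action of each braid generator $g_\eta$ on curves, using the induced action on homology classes via~\eqref{eq:4} as a first filter and upgrading to isotopy where homology alone is insufficient, and by comparing directly with the corresponding words in \cite{MCGprimer}. Since the two presentations involve the same generators and the same subsurface configurations, matching the relations family by family establishes the equivalence.
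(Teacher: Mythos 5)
Your proposal matches the paper's treatment: the paper gives no independent proof of this theorem, but simply cites Wajnryb's presentation as exposed in Theorem~5.3 of \cite{MCGprimer} and restates it in the notation of the curves $\gamma_j, \beta_j, \alpha_2$, which is exactly the translate-and-match-relations strategy you describe. Your identifications of relation~(3) as the 3-chain relation and relation~(4) as the lantern relation, with the words $w, w_1, \ldots, w_4$ carrying $\alpha_2$ to the relevant boundary and interior curves, are the correct reading of that source.
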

Of course, the abstract generator $g_\eta$ simply corresponds to the
twist $\tau_\eta$.

\section{Computing cohomology}
\label{sec:computing-cohomology}

In this section we compute $H^1(\Gamma, M)$. We will do this by
proving that any cocycle is cohomologous to one with some very nice
properties. This will be done in two steps. In the first, we adapt the
cocycle to the basis $S$ (or rather, to the set of curves
$\mathcal{S}$), and in the second, we add another coboundary to adapt
the cocycle to the basis $S'$ for $M$ (again, this is with respect to
the set $\mathcal{S}'$). We do not specify what it means to »adapt a
cocycle to a basis« or »adapt a cocycle to a set of curves«; it will
be apparent from the statements of Proposition~\ref{prop:3} and
Proposition~\ref{prop:5}.


\subsection{Adapting to \texorpdfstring{$\mathcal{S}$}{S}}
\label{sec:adapting-s}

\begin{proposition}
  \label{prop:3}
  Any cohomology class in $H^1(\Gamma, M)$ is represented by
  a cocycle $u$ satisfying that for each $\eta\in \mathcal{S}$, the
  coefficient of $[\eta]$ in the expansion of $u(\tau_\eta)$ in terms
  of the basis $S$ is $0$. Moreover, this determines $u$ uniquely.
\end{proposition}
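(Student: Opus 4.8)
The plan is to add a carefully chosen coboundary to a given cocycle and to exploit how coboundaries perturb the values $u(\Twist{\eta})$. Recall from Section~\ref{sec:group-cohomology} that a coboundary has the form $g\mapsto m-gm$ for some $m\in M$. Evaluating on a twist $\Twist{\eta}$ and applying the action formula~\eqref{eq:4}, I would compute
\begin{equation*}
  m - \Twist{\eta} m = -\omega(m, [\eta])\,[\eta].
\end{equation*}
This identity is the crux of the whole argument: adding the coboundary $g\mapsto m-gm$ to $u$ changes $u(\Twist{\eta})$ only in the $[\eta]$-direction, adjusting the coefficient of $[\eta]$ by $-\omega(m,[\eta])$ while leaving every other coordinate of $u(\Twist{\eta})$ untouched.

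It then remains to track these adjustments over all of $\mathcal{S}$. Writing $m = \sum_k (p_k a_k + q_k b_k)$ and using the symplectic relations~\eqref{eq:19}, the coefficient of $a_j$ in $u(\Ta{j})$ is shifted by $-\omega(m,a_j) = q_j$, and the coefficient of $b_j$ in $u(\Tb{j})$ is shifted by $-\omega(m,b_j) = -p_j$. Thus the $2g$ coefficients named in the statement are controlled independently by the $2g$ coordinates of $m$, via the assignment $m\mapsto \prt{\omega(m,a_j),\omega(m,b_j)}_{1\le j\le g}$, which is an isomorphism $M\isom \setZ^{2g}$ because $\omega$ is nondegenerate and $S$ is a basis.

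For existence I would start from an arbitrary representative $u$, read off for each $j$ the coefficient of $a_j$ in $u(\Ta{j})$ and the coefficient of $b_j$ in $u(\Tb{j})$, and solve the resulting diagonal system for $p_j$ and $q_j$; the corresponding $m$ produces a coboundary whose sum with $u$ satisfies all $2g$ vanishing conditions simultaneously. For uniqueness, if two adapted representatives differed by a coboundary $g\mapsto m-gm$, then every shift $q_j$ and $-p_j$ would have to vanish, so $m=0$ by the isomorphism above, and the two representatives coincide.

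I do not expect a genuine obstacle here: once the identity $m-\Twist{\eta}m = -\omega(m,[\eta])[\eta]$ is in hand, both halves reduce to the single observation that the symplectic form identifies $M$ with its dual. The only point deserving care is that this identification is an isomorphism over $\setZ$ and not merely rationally, which is exactly where the fact that $S$ is a symplectic (hence unimodular) basis enters.
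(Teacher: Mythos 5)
Your proof is correct and follows essentially the same route as the paper: both add the coboundary of a suitably chosen $m$ and use the twist action formula~\eqref{eq:4} to see that this shifts only the $[\eta]$-coefficient of $u(\tau_\eta)$, by $-\omega(m,[\eta])$, so that the $2g$ conditions can be solved independently and uniqueness follows from nondegeneracy of $\omega$ on the basis $S$. Your phrasing via the duality $m\mapsto \prt{\omega(m,a_j),\omega(m,b_j)}_j$ is just a more uniform packaging of the paper's explicit choice $\sum_j y_j a_j - x_j b_j$.
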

\begin{proof}
  Recall that $\mathcal{S}$ is the collection of the $2g$ curves
  $\alpha_j, \beta_j$, the homology classes of which constitute the
  basis~$S$. We see from equations~\eqref{eq:5},~\eqref{eq:6}
  and~\eqref{eq:7} that $1 - \Ta j$ kills all basis elements except
  $b_j$, and that $(1 - \Ta j) b_j = a_j$. Similarly, $1-\Tb j$ kills
  all basis elements except $a_j$, and $(1-\Tb j)a_j = -b_j$. If $u$
  is any cocycle, let $x_j$ denote the coefficient of $a_j$ in $u(\Ta
  j)$ and $y_j$ the coefficient of $b_j$ in $u(\Tb j)$. Adding the
  coboundary of the homology element
  \begin{equation*}
    \sum_{j=1}^g  y_j a_j - x_j b_j
  \end{equation*}
  to $u$ produces a cocycle with the required properties. It is clear
  that if $m$ is any non-zero homology element, there is some $\eta\in
  \mathcal{S}$ such that $(1-\tau_\eta)m$ contains a non-trivial
  $[\eta]$-component, proving the uniqueness claim.
\end{proof}
In a sense, with this proposition we have used up all the freedom
there is in the choice of cocycle representing a given cohomology
class.
\begin{proposition}
  \label{prop:2}
  The cohomology group $H^1(\Gamma, M)$ is a free abelian group of
  finite rank.
\end{proposition}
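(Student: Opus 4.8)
The plan is to exploit the uniqueness statement from Proposition~\ref{prop:3} to exhibit $H^1(\Gamma, M)$ as a subquotient of a finitely generated free abelian group, and then invoke the fact that any subgroup of such a group is again free of finite rank. Let me elaborate.

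First I would note that a cocycle is determined by its values on the $2g+1$ Humphries generators (as recalled in Section~\ref{sec:group-cohomology}), so evaluation on these generators embeds $Z^1(\Gamma, M)$ as a subgroup of $M^{2g+1}$. Since $M$ is free abelian of rank $2g$---it carries the symplectic basis $S$---the group $M^{2g+1}$ is free abelian of rank $2g(2g+1)$, and therefore so is its subgroup $Z^1(\Gamma, M)$. The issue preventing us from concluding immediately is that $H^1 = Z^1/B^1$ is a \emph{quotient} of this group, and quotients of free abelian groups need not be free.

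This is where the uniqueness clause of Proposition~\ref{prop:3} becomes essential, since it lets me replace the quotient by an honest subgroup. The normalization condition imposed there is linear and homogeneous in the cocycle, so the normalized cocycles form a subgroup $N\subseteq Z^1(\Gamma, M)$. I would then verify that the composite $N\hookrightarrow Z^1(\Gamma, M)\to H^1(\Gamma, M)$ is an isomorphism of abelian groups: surjectivity is the existence half of Proposition~\ref{prop:3}, while injectivity follows from its uniqueness half, since the zero cocycle is itself normalized, so any normalized cocycle representing the trivial class must equal it. Consequently $H^1(\Gamma, M)\isom N$.

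Finally, $N$ is a subgroup of the finitely generated free abelian group $Z^1(\Gamma, M)$, hence free abelian of finite rank by the structure theorem, and the same conclusion then transfers to $H^1(\Gamma, M)$. I do not anticipate a genuine obstacle here, as the substantive work was already carried out in Proposition~\ref{prop:3}; the only points requiring care are checking that the normalized cocycles constitute a subgroup and that the quotient map restricts to an isomorphism on it---which is precisely the maneuver that upgrades $H^1(\Gamma, M)$ from a subquotient to a subgroup of a free abelian group.
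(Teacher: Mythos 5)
Your proof is correct and follows essentially the same route as the paper: the paper also uses the existence-and-uniqueness statement of Proposition~\ref{prop:3} to obtain a section $H^1(\Gamma, M)\to Z^1(\Gamma, M)$ (your subgroup $N$ of normalized cocycles is exactly its image) and finite generation of $\Gamma$ to bound the rank. Your write-up merely makes explicit the steps the paper leaves implicit, namely that the section is a homomorphism onto a subgroup of a finitely generated free abelian group.
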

\begin{proof}
  We have just proved that there is a section $H^1(\Gamma, M)\to
  Z^1(\Gamma, M)$, which proves that there is no torsion. The claim
  about the finite rank follows from the fact that $\Gamma$ is
  finitely generated.
\end{proof}

From now on, we will assume that $u$ is a cocycle which is adapted to
$\mathcal{S}$ in the sense of Proposition~\ref{prop:3}. We will now
proceed to determine other facts about $u$, using simple relations
between the twists in the curves from $\mathcal{C}$.

First, consider the braid relation
$\braidrel{\alpha_j}{\beta_j}$. Applying the cocycle condition, we obtain
\begin{equation}
  \label{eq:20}
  u(\Ta j) + \Ta j u(\Tb j) + \Ta j \Tb j u(\Ta j) = 
  u(\Tb j) + \Tb j u(\Ta j) + \Tb j \Ta j u(\Tb j).
\end{equation}
Since $u(\Tb j)$ is a linear combination of elements from $S-\{b_j\}$,
we have $\Ta j u(\Tb j) = u(\Tb j)$, and symmetrically, $\Tb j u(\Ta
j) = u(\Ta j)$. Hence two terms on either side of \eqref{eq:20}
cancel, and we are left with
\begin{equation}
  \label{eq:21}
  \Ta j u(\Ta j) = \Tb j u(\Tb j).
\end{equation}
By the normalization assumption, we have $\pi_j u(\Ta j) = y_j
b_j$ and $\pi_j u(\Tb j) = z_j a_j$ for some integers $y_j,
z_j$. Applying the projection $\pi_j$ to~\eqref{eq:21} and using that
$\pi_j$ commutes with the two twists, we obtain
\begin{equation*}
  -y_j a_j + y_j b_j = z_j a_j + z_j b_j
\end{equation*}
which implies $y_j = z_j = 0$. So we have
\begin{equation}
  \label{eq:24}
  \pi_j u(\Ta j) = \pi_j u(\Tb j) = 0 
\end{equation}
for all $1\leq j\leq g$. This in turn implies that $u(\Ta j)
= u(\Tb j)$ (apply $\pi_j'$ to~\eqref{eq:21} and use that it
annihilates the twists).

We record this consequence of our calculations so far.
\begin{lemma}
  \label{lem:3}
  For $g = 1$, $H^1(\Gamma, M)$ is trivial.
\end{lemma}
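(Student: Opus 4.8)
The plan is to observe that in genus~$1$ the computations already carried out essentially finish the job, because the symplectic subspace $M_1 = \spn_\setZ(a_1, b_1)$ now coincides with all of~$M$. First I would specialize to $g = 1$, $j = 1$: since $S = (a_1, b_1)$ is the entire basis, $M_1 = M$ and the projection $\pi_1$ is the identity on~$M$ (so $\pi_1' = 0$). Consequently the normalization identity~\eqref{eq:24} reads not merely $\pi_1 u(\Ta 1) = \pi_1 u(\Tb 1) = 0$ but directly
\begin{equation*}
  u(\Ta 1) = u(\Tb 1) = 0
\end{equation*}
for the unique cocycle $u$ adapted to~$\mathcal{S}$ supplied by Proposition~\ref{prop:3}.

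Next I would invoke the classical fact that the mapping class group $\Gamma_{1,1}$ is generated by the two Dehn twists $\Ta 1$ and $\Tb 1$ in the curves $\alpha_1, \beta_1$, which meet in a single point. This is precisely the Humphries generating set $\mathcal{S}'\cup\{\alpha_2\}$ in the degenerate case $g = 1$, where $\mathcal{S}' = \{\gamma_1, \beta_1\} = \{\alpha_1, \beta_1\}$ and no curve $\alpha_2$ is present. Since a cocycle is completely determined by its values on a generating set via the cocycle condition~\eqref{eq:17}, the vanishing of $u$ on both generators forces $u \equiv 0$. Finally, because Proposition~\ref{prop:3} guarantees that \emph{every} class in $H^1(\Gamma, M)$ is represented by such an adapted cocycle, and that representative is identically zero, every class is trivial and $H^1(\Gamma, M) = 0$.

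I do not anticipate any genuine obstacle: once~\eqref{eq:24} is specialized to $g = 1$, the argument is just the observation that a cocycle is pinned down by its values on generators. The only ingredient not internal to the preceding calculation is the standard statement that $\Ta 1$ and $\Tb 1$ generate $\Gamma_{1,1}$, which is the single external fact I would cite (e.g.\ from~\cite{MCGprimer}).
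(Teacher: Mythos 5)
Your proof is correct and is essentially the paper's own argument: the paper states the lemma as a direct \enquote{consequence of our calculations so far}, meaning exactly your observation that for $g=1$ the projection $\pi_1$ is the identity on $M$, so~\eqref{eq:24} forces the adapted cocycle of Proposition~\ref{prop:3} to satisfy $u(\Ta{1}) = u(\Tb{1}) = 0$ and hence to vanish identically. The one external fact you cite, that $\Ta{1}$ and $\Tb{1}$ generate $\Gamma_{1,1}$, is the same ingredient the paper implicitly relies on.
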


From now on we consider the case $g\geq 2$. For $k\not= j$, $\Ta j$
commutes with $\Ta k$ and $\Tb k$. Applying the cocycle condition to
these commutativity relations gives
\begin{align*}
  u(\Ta j) + \Ta j u(\Ta k) &=
  u(\Ta k) + \Ta k u(\Ta j) \\
  u(\Ta j) + \Ta j u(\Tb k) &=
  u(\Tb k)  + \Tb k u(\Ta j)
\end{align*}
and using the projection $\pi_k$ along with~\eqref{eq:24} we see that
\begin{align*}
  \pi_k u(\Ta j) &= \Ta k \pi_k u(\Ta j) \\
  \pi_k u(\Ta j) &= \Tb k \pi_k u(\Ta j).
\end{align*}
This clearly implies that $\pi_k u(\Ta j) = 0$. Since this
holds for any $k$, we obtain this important result.
\begin{proposition}
  \label{prop:1}
  A cocycle which is normalized in the sense of
  Proposition~\ref{prop:3} vanishes on each of the Dehn twists $\Ta
  j$, $\Tb j$.
\end{proposition}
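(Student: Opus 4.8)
The plan is to consolidate the component-wise vanishing that the preceding computation has already produced. Fix $j$. Equation~\eqref{eq:24} supplies the diagonal component $\pi_j u(\Ta j) = 0$, while the commutation argument just carried out---which is available because $g \geq 2$ guarantees the existence of some $k \neq j$---gives $\pi_k u(\Ta j) = 0$ for every $k \neq j$. Hence $\pi_k u(\Ta j)$ vanishes for all $1 \leq k \leq g$.

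Since $S$ is a symplectic basis, $M$ decomposes as the direct sum $\bigoplus_{k=1}^g M_k$, and the associated projections satisfy $\sum_{k=1}^g \pi_k = \id$. Summing the component equations therefore gives
\begin{equation*}
  u(\Ta j) = \sum_{k=1}^g \pi_k u(\Ta j) = 0,
\end{equation*}
which disposes of every $\alpha$-twist. For the $\beta$-twists I would invoke the identity $u(\Ta j) = u(\Tb j)$ that was extracted from the braid relation $\braidrel{\alpha_j}{\beta_j}$ immediately after~\eqref{eq:24}; combined with $u(\Ta j) = 0$ this yields $u(\Tb j) = 0$ at once, for each $1 \leq j \leq g$.

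I do not anticipate any genuine obstacle, as the statement merely collects facts already established; the argument is pure bookkeeping resting on the direct-sum decomposition and the $\alpha \leftrightarrow \beta$ symmetry. The only implication worth isolating is the one underlying the commutation step: if $\pi_k u(\Ta j) = \lambda a_k + \mu b_k$ is fixed by both $\Ta k$ and $\Tb k$, then~\eqref{eq:7} forces first $\mu = 0$ and then $\lambda = 0$, so that component must vanish. With that in place, nothing further is required.
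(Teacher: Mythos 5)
Your proposal is correct and follows the paper's own route exactly: the paper establishes $\pi_j u(\Ta j) = 0$ via the braid relation (equation~\eqref{eq:24}), $\pi_k u(\Ta j) = 0$ for $k \neq j$ via the commutation relations (with precisely the fixed-point argument you isolate using~\eqref{eq:7}), and then sums the components and transfers to $\Tb j$ via $u(\Ta j) = u(\Tb j)$. The proposition in the paper is stated as a summary of this preceding computation, and your bookkeeping assembles the same pieces in the same way.
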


The next lemma may sound a bit cryptic, but the text following the
proof should make its usefulness clear.
\begin{lemma}
  \label{lem:4}
  If $\sigma$ is a simple closed curve disjoint from some $\eta\in
  \mathcal{S}$, then the coefficient of $[\iota \eta] = \iota[\eta]$
  in $u(\tau_\sigma)$ is~$0$, where $u(\tau_\sigma)$ is written in
  terms of the basis $S$.
\end{lemma}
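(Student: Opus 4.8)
The plan is to exploit that disjointness of $\sigma$ and $\eta$ forces the corresponding twists to commute, and then to combine this commutation relation with the vanishing already established in Proposition~\ref{prop:1}. The whole argument is a short deduction from that proposition together with the explicit formula~\eqref{eq:4} for the action.

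First I would record the standard fact that disjoint simple closed curves have commuting Dehn twists, so $\tau_\sigma \tau_\eta = \tau_\eta \tau_\sigma$. Expanding both sides with the cocycle condition~\eqref{eq:17} yields
\begin{equation*}
  u(\tau_\sigma) + \tau_\sigma u(\tau_\eta) = u(\tau_\eta) + \tau_\eta u(\tau_\sigma).
\end{equation*}
Since $\eta \in \mathcal{S}$, Proposition~\ref{prop:1} gives $u(\tau_\eta) = 0$, and the relation collapses to
\begin{equation*}
  u(\tau_\sigma) = \tau_\eta u(\tau_\sigma),
\end{equation*}
so that $m \coloneqq u(\tau_\sigma)$ is invariant under the twist $\tau_\eta$.

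Next I would translate this invariance into a statement about coefficients using~\eqref{eq:4}, namely $\tau_\eta m = m + \omega(m, [\eta])[\eta]$. Invariance of $m$ is therefore equivalent to $\omega(m, [\eta]) = 0$. The key observation is that, because $S$ is a symplectic basis, the pairing $\omega(\,\cdot\,, [\eta])$ isolates exactly the $\iota[\eta]$-coefficient: writing $m = \sum_k (p_k a_k + q_k b_k)$, equation~\eqref{eq:2} gives $\omega(m, a_j) = -q_j$ when $\eta = \alpha_j$ (so $\iota[\eta] = b_j$), and $\omega(m, b_j) = p_j$ when $\eta = \beta_j$ (so $\iota[\eta] = a_j$). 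In either case the vanishing $\omega(m, [\eta]) = 0$ is precisely the vanishing of the $\iota[\eta]$-coefficient of $u(\tau_\sigma)$, which is the assertion of the lemma.

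There is essentially no serious obstacle here; the content is entirely carried by Proposition~\ref{prop:1}. The one point needing a moment's care is identifying \emph{which} coefficient is killed: the twist $\tau_\eta$ perturbs $m$ by a multiple of $[\eta]$ whose size is the symplectic pairing of $m$ with $[\eta]$, and since $\omega$ pairs $[\eta]$ with $\iota[\eta]$ rather than with itself, it is the $\iota[\eta]$-component — not the $[\eta]$-component — that is forced to be zero. Matching this against the definition of the involution $\iota$ is the only bookkeeping required.
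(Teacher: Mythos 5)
Your proof is correct and follows essentially the same route as the paper: commutativity of the disjoint twists plus the vanishing $u(\tau_\eta)=0$ from Proposition~\ref{prop:1} collapses the cocycle condition to $u(\tau_\sigma)=\tau_\eta u(\tau_\sigma)$, and then one reads off that the $\iota[\eta]$-coefficient must vanish. The only cosmetic difference is that you deduce this last step from the intersection-form formula~\eqref{eq:4} via $\omega(u(\tau_\sigma),[\eta])=0$, whereas the paper observes directly that $\tau_\eta$ fixes every basis element except $\iota[\eta]$, which it moves by $\pm[\eta]$ --- the same fact in different clothing.
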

\begin{proof}
  Since $\sigma$ and $\eta$ are disjoint, the associated twists
  commute. Applying the cocycle condition and the vanishing of
  $u(\tau_\eta)$ this yields
  \begin{equation*}
    u(\tau_\sigma) = \tau_\eta u(\tau_\sigma).
  \end{equation*}
  Since $\tau_\eta (\iota[\eta]) = \iota [\eta] \pm [\eta]$ and
  $\tau_\eta$ acts as the identity on all other basis element, this is
  only possible if $u(\tau_\sigma)$ does not have a
  $\iota[\eta]$-component.
\end{proof}
Notice, for example, that this implies that $u(\Tg j)$ is a linear
combination of $a_{j-1}$ and $a_{j}$, since $\beta_{j-1}$ and
$\beta_{j}$ are the only curves from $\mathcal{S}$ which $\gamma_j$
intersect. In fact we have:
\begin{lemma}
  \label{lem:5}
  There are integers $q_j$, $j=2, \ldots, g$, such that $u(\Tg j) =
  q_j a_{j-1} - q_j a_j = q_j c_j$.
\end{lemma}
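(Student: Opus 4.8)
The plan is to pin down the two undetermined coefficients of $u(\Tg j)$ using a single braid relation. By the remark following Lemma~\ref{lem:4}, the element $u(\Tg j)$ lies in $\spn_\setZ(a_{j-1}, a_j)$, since $\beta_{j-1}$ and $\beta_j$ are the only curves of $\mathcal{S}$ that $\gamma_j$ meets. So I would write $u(\Tg j) = p_j a_{j-1} + r_j a_j$ with $p_j, r_j \in \setZ$; the entire content of the lemma is then the single relation $p_j + r_j = 0$, after which setting $q_j = p_j$ gives $u(\Tg j) = q_j(a_{j-1} - a_j) = q_j c_j$.

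To produce that relation I would exploit that $\gamma_j$ and $\beta_j$ intersect in exactly one point, so their twists satisfy the braid relation $\braidrel{\gamma_j}{\beta_j}$. Applying the cocycle condition~\eqref{eq:17} to both sides and using that $u(\Tb j) = 0$ by Proposition~\ref{prop:1}, every term carrying a factor $u(\Tb j)$ vanishes and the relation collapses to
\begin{equation*}
  u(\Tg j) + \Tg j \Tb j\, u(\Tg j) = \Tb j\, u(\Tg j).
\end{equation*}
This is where the work of Proposition~\ref{prop:1} pays off: the vanishing of $u$ on the $\beta$-twists is exactly what makes the braid relation yield a usable identity rather than a tangle of unknowns.

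The rest is bookkeeping with the explicit twist actions. Writing $v = p_j a_{j-1} + r_j a_j$, I would first apply $\Tb j$ using~\eqref{eq:6} and~\eqref{eq:7} to get $\Tb j v = p_j a_{j-1} + r_j a_j + r_j b_j$, and then apply $\Tg j$ using~\eqref{eq:9} and~\eqref{eq:10}, the $a_j$-terms cancelling, to obtain $\Tg j \Tb j v = (p_j + r_j) a_{j-1} + r_j b_j$. Substituting into the collapsed relation and comparing $a_{j-1}$-coefficients gives $2p_j + r_j = p_j$, that is $p_j + r_j = 0$, while the $a_j$- and $b_j$-coefficients agree automatically. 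I do not expect a genuine obstacle here: the only real choice is which incident braid relation to use, and the relation across $\beta_{j-1}$ would work just as well and can serve as a consistency check.
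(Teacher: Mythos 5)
Your proof is correct and is essentially identical to the paper's: both apply the cocycle condition to the braid relation $\braidrel{\gamma_j}{\beta_j}$, use the vanishing of $u(\Tb j)$ from Proposition~\ref{prop:1} and the constraint from Lemma~\ref{lem:4}, and then compare $a_{j-1}$-coefficients to force the two coefficients of $u(\Tg j)$ to sum to zero. The only differences are notational (your $p_j, r_j$ versus the paper's $q_j, p_j$) and your added observation that the braid relation with $\beta_{j-1}$ would serve equally well.
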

The case $j=1$ is only omitted because $a_0$ is not defined; we
clearly have $u(\Tg 1) = u(\Ta 1) = 0 = 0c_1$.
\begin{proof}
  The braid relation between $\Tb j$ and $\Tg j$ yields
  \begin{equation}
    \label{eq:26}
    \Tb j u(\Tg j) = u(\Tg j) + \Tg j \Tb j u(\Tg j)
  \end{equation}
  after applying the cocycle condition and the vanishing of $u(\Tb
  j)$. 
  Using Lemma~\ref{lem:4}, we may write $u(\Tg j) = q_j a_{j-1} + p_j
  a_j$ for some integers $q_j, p_j$. Then the left-hand side
  of~\eqref{eq:26} is
  \begin{equation*}
    q_j a_{j-1} + p_j a_j + p_j b_j
  \end{equation*}
  while the right-hand side is
  \begin{equation*}
    (q_j a_{j-1} + p_j a_j) + (q_j a_{j-1} + p_j a_j + p_j b_j + p_j
    a_{j-1} - p_j a_j) = (2q_j + p_j) a_{j-1} + p_j a_j + p_j b_j.
  \end{equation*}
  From this it follows that 
  $p_j = -q_j$, so we do indeed have $u(\Tg j) = q_j (a_{j-1} - a_j) =
  q_j c_j$.
\end{proof}
Thus $u$ is completely determined by the $g-1$ integers $q_2, \ldots,
q_g$. In particular, the rank of $H^1(\Gamma, H)$ is now bounded above
by $g-1$.

We now adapt the cocycle to the set $\mathcal{S}'$:
\begin{proposition}
  \label{prop:5}
  Let $u$ be a cocycle adapted to $\mathcal{S}$ in the sense of
  Proposition~\ref{prop:3}. Then $u$ is cohomologous to a cocycle,
  again denoted $u$, satisfying $u(\Tb j) = u(\Tg j) = 0$ for $j=1,
  \ldots, g$.
\end{proposition}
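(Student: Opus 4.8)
The plan is to use the only remaining freedom, namely replacing $u$ by the cohomologous cocycle $g\mapsto u(g) + m - gm$ for a single fixed $m\in M$, and to choose $m$ so as to kill the values $u(\Tg j)$ while preserving the vanishing on the $\Tb j$. First I would determine the admissible $m$: writing $m = \sum_k (s_k a_k + t_k b_k)$ and using~\eqref{eq:8}, one finds $(1-\Tb j)m = -s_j b_j$, so the requirement that the modified cocycle still satisfy $u(\Tb j) = 0$ (which holds for the original $u$ by Proposition~\ref{prop:1}) forces every $s_k = 0$. Thus $m$ must lie in $\spn_\setZ(b_1, \ldots, b_g)$.

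Next, with $m = \sum_k t_k b_k$, I would read off the effect on the $\gamma$-twists from~\eqref{eq:22}. Recall that the starting cocycle has $u(\Tg j) = q_j c_j$ by Lemma~\ref{lem:5}. A direct computation gives $(1-\Tg j)m = (t_{j-1}-t_j)c_j$ for $2\leq j\leq g$, while in the degenerate case $\gamma_1 = \alpha_1$ one has $(1-\Tg 1)m = (1-\Ta 1)m = t_1 a_1$ with $u(\Tg 1) = 0$. Demanding that the modified cocycle vanish on every $\Tg j$ therefore amounts to the linear system $t_1 = 0$ and $t_j - t_{j-1} = q_j$ for $j = 2, \ldots, g$.

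This recursion telescopes to the unique integer solution $t_j = \sum_{i=2}^{j} q_i$; setting $m = \sum_{j=2}^{g}\bigl(\sum_{i=2}^{j} q_i\bigr)b_j$ and adding its coboundary to $u$ then produces a cocycle with $u(\Tb j) = u(\Tg j) = 0$ for all $j$, as required. I do not expect a genuine obstacle here: the $\beta$-conditions merely delete the $a$-coordinates of $m$, while the $\gamma$-conditions form a triangular, in fact telescoping, recursion in the surviving coordinates $t_j$, so the two families of constraints are automatically compatible and the system is solvable over $\setZ$. The only point worth verifying carefully is this compatibility, together with the bookkeeping of signs coming from~\eqref{eq:22}.
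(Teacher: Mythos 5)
Your proof is correct and follows essentially the same route as the paper: both add the coboundary of an element $m = \sum_j t_j b_j$ whose coefficients solve the telescoping recursion $t_1 = 0$, $t_j - t_{j-1} = q_j$, which is exactly the paper's $r_j$. The only difference is cosmetic --- you also verify that $m$ \emph{must} lie in $\spn_\setZ(b_1,\ldots,b_g)$ (necessity), where the paper only notes that such $m$ suffice to preserve the vanishing on the $\Tb j$.
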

Note that this new $u$ may no longer vanish on the twists $\Ta j$ for
$j\geq 2$.
\begin{proof}
  Since $(1-\Tb j)b_k = 0$ for all $j,k$, and since $u$ already
  vanishes on $\Tb j$ for all $j$, adding the coboundary of any
  homology element which is a linear combination of the $b_j$
  preserves this property.

  Let $q_j$ denote the integers such that $u(\Tg j) = q_j c_j$. Put
  $r_1 = 0$ and $r_j = r_{j-1} + q_j$ for $j > 1$. Then
  \begin{equation*}
    (1-\Tg k) \sum_{j=1}^g r_j b_j = r_{k-1} c_k - r_k c_k = -q_k c_k
  \end{equation*}
  so adding the coboundary of $\sum_{j=1}^g r_j b_j$ produces a
  cocycle with the required properties.
\end{proof}

With these preparations, we can finally compute $H^1(\Gamma, M)$.
\begin{theorem}
  \label{thm:1}
  For $g\geq 3$, the cohomology group $H^1(\Gamma, M)$ is isomorphic
  to $\setZ$.
\end{theorem}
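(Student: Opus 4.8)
The plan is to combine Proposition~\ref{prop:3} and Proposition~\ref{prop:5} with the two nontrivial relations of Theorem~\ref{thm:2}: the propositions together with relations (a) and (b) will squeeze $H^1(\Gamma, M)$ into $\setZ$, and relation~(d) will show that the image is nonzero. So I would first take an arbitrary class and replace it, via Proposition~\ref{prop:3} and then Proposition~\ref{prop:5}, by the (canonically determined) cocycle $u$ with $u(\Tb j) = u(\Tg j) = 0$ for every $j$. The Humphries generators are the twists in $\mathcal{S}'\cup\{\alpha_2\}$, and since $\Tg 1 = \Ta 1$, $u$ vanishes on all of them except possibly $\Ta 2$; thus $u$ is determined by the single element $m = u(\Ta 2)$. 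Because $u(\Tb 2) = 0$, the cocycle condition turns the braid relation $\braidrel{\alpha_2}{\beta_2}$ into $m + \Ta 2\Tb 2\,m = \Tb 2\,m$, i.e.
\begin{equation*}
  (1 - \Tb 2 + \Ta 2\Tb 2)\,m = 0.
\end{equation*}
On $M_2'$ both twists act trivially, so this operator restricts to the identity and $\pi_2' m = 0$; evaluating it on $M_2 = \spn(a_2, b_2)$ then forces the $b_2$-coefficient of $m$ to vanish. Hence $m = n a_2$ for some $n\in\setZ$, and $[u]\mapsto n$ is an injective homomorphism $H^1(\Gamma, M)\to\setZ$ (if $n = 0$ then $u$ vanishes on all generators, so $u = 0$).

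It remains to see that this map is nonzero, and since a nonzero subgroup of $\setZ$ is isomorphic to $\setZ$, it is enough to exhibit one nonzero class. I would therefore check that the assignment $u(\Ta 2) = a_2$, with $u$ vanishing on every other Humphries generator, extends to a cocycle; below I write each word in the generators also for the automorphism of $M$ it induces. Every relation of Theorem~\ref{thm:2} not mentioning $\alpha_2$ is automatic, since $u$ kills each of its letters. A commutation relation~(a) involving $\alpha_2$ amounts to $(1-\tau_\lambda)a_2 = 0$ for $\lambda$ disjoint from $\alpha_2$, which holds because $\omega(a_2, [\lambda]) = 0$ by disjointness; the braid relation~(b) with $\beta_2$ is the computation of the previous paragraph, now consistent with $m = a_2$. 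For relation~(c) the left side is a word in $\Tg 1, \Tb 1, \Tg 2$ alone, so $u$ sends it to $0$; expanding the right side by the cocycle condition and~\eqref{eq:31}, using $u(w) = 0$, gives $a_2 + \Ta 2\,w\,a_2$, and a short calculation with~\eqref{eq:5}--\eqref{eq:11} yields $w\,a_2 = -a_2$, so this equals $a_2 - a_2 = 0$ as required.

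The real work is relation~(d), which I expect to be the main obstacle. Each of $x_1, x_2, x_3$ is a conjugate of $\Ta 2$ by a word $v$ in the $\beta$- and $\gamma$-twists, so $u(v) = 0$ and~\eqref{eq:31} collapses to $u(x_i) = v\,a_2$, a single application of an induced automorphism to $a_2$. The element $x_4 = w_4\Ta 2 w_4\inv$ is different, since $w_4$ contains $x_3$ and hence $u(w_4)\neq 0$; here the part of $w_4$ following $x_3$ has trivial $u$, giving $u(w_4) = P\,u(x_3)$ with $P$ the prefix of $w_4$, after which~\eqref{eq:31} handles the outer conjugation. Running both sides of~\eqref{eq:28} through the cocycle condition then reduces the required identity to an equality $\Phi_L\,a_2 = \Phi_R\,a_2$ in $M$, where $\Phi_L$ and $\Phi_R$ are explicit operators built from the automorphisms induced by $w_1, w_2, w_3$, by $\Ta 2$, and by $\Tg 1\Tg 2\Tg 3$. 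Evaluating these words on the symplectic basis through~\eqref{eq:5}--\eqref{eq:11} is routine but lengthy, and this bookkeeping is where all the effort lies; carrying it out gives $\Phi_L a_2 = \Phi_R a_2$, so the assignment is a genuine cocycle. Finally, it is not a coboundary: if a coboundary $\tau_\eta\mapsto (1-\tau_\eta)v$ vanishes on all $\Tb j$ and $\Tg j$, then the conditions $(1-\Tb j)v = 0$ eliminate every $a$-coefficient of $v$ and the conditions $(1-\Tg j)v = 0$ (together with $\Tg 1 = \Ta 1$) then eliminate every $b$-coefficient, forcing $v = 0$, whence $(1-\Ta 2)v = 0\neq a_2$. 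Thus $H^1(\Gamma, M)$ is a nonzero subgroup of $\setZ$, and therefore $H^1(\Gamma, M)\isom\setZ$.
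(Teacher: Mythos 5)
Your strategy is the same as the paper's: normalize via Proposition~\ref{prop:3} and Proposition~\ref{prop:5} so that the cocycle vanishes on the twists in $\mathcal{S}'$, observe that it is then determined by $u(\Ta 2)$, pin that value down to $\setZ a_2$, and finally verify that the assignment $u(\Ta 2) = a_2$, $u(\tau_\eta) = 0$ for $\eta\in\mathcal{S}'$, is consistent with every relation in Wajnryb's presentation. Your first half is complete, and in one respect more self-contained than the paper's: the paper gets $u(\Ta 2) = r_2 a_2$ by tracking the explicit coboundary added in the proof of Proposition~\ref{prop:5}, whereas you re-derive it from the braid relation $\braidrel{\alpha_2}{\beta_2}$, and your closing observation (a coboundary vanishing on all $\Tb j$, $\Tg j$ must be the coboundary of $v=0$) cleanly settles well-definedness and injectivity of $[u]\mapsto n$.

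The gap is in the second half. For relation~(d) you set up the computation exactly as the paper does --- $u(x_i) = v\,a_2$ for the conjugating word $v$ when $i=1,2,3$, then $u(w_4) = P\,u(x_3)$, then~\eqref{eq:31} for $x_4$ --- but you then write \enquote{carrying it out gives $\Phi_L a_2 = \Phi_R a_2$} without carrying anything out. That equality is not a formality you may assume: it is precisely the point on which the theorem turns. If the two sides of~\eqref{eq:28} evaluated differently, your injection $H^1(\Gamma, M)\to \setZ$ would still stand, but the candidate cocycle would fail to exist and the group would be trivial --- compare Lemma~\ref{lem:3}, where exactly this kind of relation-chasing kills everything in genus one. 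The paper devotes the bulk of its proof to this evaluation, finding $u(x_1) = a_1 - a_2 + a_3$, $u(x_2) = a_3 - a_1$, $u(x_3) = a_1 - b_3$, $u(w_4) = b_1 - a_2 + b_2 + 2a_3 + b_3$, $u(x_4) = 2a_3$, and both sides of~\eqref{eq:28} equal to $2a_3$. Until you have performed this computation (and the shorter check $w\,a_2 = -a_2$ for relation~(c), which the paper also leaves to the reader, so that one is on par), what you have is a correct outline of the non-triviality argument rather than a proof of it.
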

\begin{proof}
  By Theorem~\ref{thm:2}, the mapping class group is generated by the
  twists in the curves from $\mathcal{S}'\cup \{\alpha_2\}$. Since a
  cocycle which is adapted to $\mathcal{S}'$ vanishes on each of the
  twists in these curves, we see that such a cocycle is completely
  determined by the element $u(\Ta 2) \in M$. In fact, we can say
  even more: Before we perform the adaptation to $\mathcal{S}'$, we
  have $u(\Ta 2) = 0$, so it follows by construction that
  \begin{equation*}
    u(\Ta 2) = (1-\Ta 2)\bigl( \sum_j r_j b_j \bigr) = r_2 a_2.
  \end{equation*}
  Hence the cocycle is determined by the single integer $r_2 = q_2$,
  and $H^1(\Gamma, M)$ is thus either trivial or isomorphic to
  $\setZ$.

  To see that it is not trivial, we must prove that putting
  $u(\tau_\eta) = 0$ for $\eta\in \mathcal{S}'$ and $u(\Ta 2) = a_2$
  defines a cocycle. To do this, we need to check each of the
  relations in Wajnryb's presentation.

  The disjointness and braid relations are only interesting if one of
  the curves is $\alpha_2$. But if $\eta\in \mathcal{S}'$ is disjoint
  from $\alpha_2$ (that is, $\eta$ is not $\beta_2$), we have
  \begin{equation*}
    u(\Ta 2 \tau_\eta) = u(\Ta 2) + \Ta 2 u(\tau_\eta) = a_2 =
    \tau_\eta a_2 = u(\tau_\eta) + \tau_\eta u(\Ta 2) = u(\tau_\eta
    \Ta 2).
  \end{equation*}
  We also have
  \begin{align*}
    u(\Ta 2 \Tb 2 \Ta 2) &= a_2 + \Ta 2 \Tb 2 a_2 = a_2 + b_2 \\
    u(\Tb 2 \Ta 2 \Tb 2) &= \Tb 2 a_2 = a_2 + b_2.
  \end{align*}

  Applying $u$ to the left-hand side of~\eqref{eq:25} and expanding
  via the cocycle condition clearly gives $0$. On the right-hand side,
  we get
  \begin{align*}
    u(\Ta 2) + u(w \Ta 2 w\inv) = a_2 + (1-w\Ta 2w\inv) u(w) + w a_2
  \end{align*}
  by~\eqref{eq:31}. Clearly $u(w)$ vanishes, and it is
  straight-forward to check that $w a_2 = -a_2$ using
  equations~\eqref{eq:8} and~\eqref{eq:22}.

  Finally, we compute the values of $u$ on the various auxilliary
  words occuring in~\eqref{eq:28}.
  \begin{align*}
    u(x_1) &= u(w_1\inv \Ta 2 w_1) = (1-w_1\inv)u(w_1\inv) + w_1\inv
    u(\Ta 2) = w_1\inv a_2 \\
    &= \Tb 2\inv \Tg 2\inv \Tg 3\inv (a_2 - b_2) = \Tb 2\inv \Tg 2\inv
    (-b_2 + a_3) = \Tb 2 (-b_2 + a_3 + a_1 - a_2) \\
    &= a_1 - a_2 + a_3
  \end{align*}
  \begin{align*} 
    u(x_2) &= w_2\inv u(x_1) = \Tb 1\inv \Tg 1\inv \Tg
    2\inv(a_1-b_1-a_2+a_3) = \Tb 1\inv \Tg 1\inv(a_3 - b_1) = \Tb
    1\inv(-a_1-b_1+a_3) \\
    &= a_3 - a_1
  \end{align*}
  \begin{align*}
    u(x_3) &= w_3\inv u(x_1) = \Tg 3\inv \Tb 3\inv (a_1 - a_2 + a_3) =
    \Tg 3\inv(a_1 - a_2 + a_3 - b_3) \\
    &= a_1 - b_3
  \end{align*}
  \begin{align*}
    u(w_4) = \Tb 3 \Tg 3 \Tb 2 \Tg 2 \Tb 1 u(x_3) = b_1 - a_2 + b_2 +
    2a_3 + b_3
  \end{align*}
  \begin{align*}
    u(x_4) = (1-w_4 \Ta 2 w_4\inv) u(w_4) + w_4 u(\Ta 2) = 2a_3
  \end{align*}
  Finally, the value of $u$ on the left-hand side of~\eqref{eq:28} is
  \begin{align*}
    u(\Ta 2 x_2 x_1) &= u(\Ta 2) + \Ta 2 u(x_2) + \Ta 2 x_2 u(x_1) = a_2 + (-a_1 + a_3) + (a_1-a_2+a_3)
    \\ &= 2a_3,
  \end{align*}
  whereas the value on the right-hand side is
  \begin{align*}
    u(\Tg 1\Tg 2\Tg 3 x_4) = \Tg1 \Tg2 \Tg3 u(x_4) = 2a_3.
  \end{align*}

  Hence the map $\{\tau_\eta \mid \eta \in \mathcal{S}' \cup
  \{\alpha_2\} \}\to M$ defined by $\Ta 2\mapsto a_2$ and
  $\tau_\eta\mapsto 0$ for $\eta\in \mathcal{S}'$ extends to a cocycle
  defined on $\Gamma$, and this cocycle represents a generator for the
  cohomology group $H^1(\Gamma, M)$.
\end{proof}

\section{Final remarks}
\label{sec:final-remarks}

\subsection{Other computations}
\label{sec:other-computations}

Earle~\cite{MR0499328} constructed a cocycle $\psi\colon \Gamma\to
H_1(\Sigma, \setR)$ such that $(2g-2)\psi$ has values in $H_1(\Sigma,
\setZ)$. Later Morita~\cite{MR1030850} proved that $H^1(\Gamma,
H_1(\Sigma, \setZ))\isom \setZ$ using a combinatorial
approach. Recently, Kuno~\cite{MR2461871} has computed Earle's cocycle
in terms of Morita's. Satoh~\cite{MR2184815} has computed the first
homology and cohomology of the automorphism and outer automorphism
groups of a free group with coefficients in the abelianization of the
free group. This list of references is of course by no means
exhaustive.

\subsection{Surface with a marked point}
\label{sec:surface-with-marked}

Instead of a surface with boundary, one could consider a closed
surface $\Sigma_*$ with a marked point (or equivalently, a closed
surface minus a point). Denote the mapping class group of $\Sigma_*$
by $\Gamma_*$. 
\begin{proposition}
  \label{prop:6}
  The natural homomorphism $\Gamma \to \Gamma_*$ induces an
  isomorphism $H^1(\Gamma, M) \isom H^1(\Gamma_*, M)$, where $M =
  H_1(\Sigma) \isom H_1(\Sigma_*)$.
\end{proposition}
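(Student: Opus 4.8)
The plan is to realize the relationship between $\Gamma$ and $\Gamma_*$ as a short exact sequence of groups and feed it into the exact sequence~\eqref{eq:30}. Capping the boundary component of $\Sigma$ with a disk carrying a single marked point produces $\Sigma_*$, and the induced \emph{capping homomorphism} $\Gamma\to\Gamma_*$ is the natural map of the statement. It is standard (see~\cite{MCGprimer}) that this map is surjective with kernel the infinite cyclic group generated by the Dehn twist $\tau_\partial$ about the boundary curve $\partial$, so that we have a central extension
\begin{equation*}
  1 \to \langle \tau_\partial \rangle \to \Gamma \to \Gamma_* \to 1
\end{equation*}
with $K := \langle \tau_\partial\rangle \isom \setZ$. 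Since the isomorphism $H_1(\Sigma)\isom H_1(\Sigma_*)$ is induced by capping and is $\Gamma$-equivariant, I regard the single module $M$ as a module for both groups, the $\Gamma$-action being pulled back from the $\Gamma_*$-action.

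First I would record the action of $K$ on $M$. The boundary of a surface with a single boundary component is null-homologous, so $[\partial] = 0$ in $M$; by the twist formula~\eqref{eq:4} this means $\tau_\partial$ acts as the identity on $M$. Hence $K$ acts trivially, giving $M^K = M$ and $H^1(K, M) = H^1(\setZ, M) = \Hom(\setZ, M) \isom M$, the last identification sending a cocycle $v$ to $v(\tau_\partial)$.

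Next I would compute the induced $\Gamma$-action on $H^1(K, M)\isom M$ and show that its invariants vanish. Because $K$ is central, conjugation acts trivially on $K$, so under the identification above the action of $\Gamma$ on $H^1(K, M)$ is simply the given action on $M$, namely the symplectic representation $\Gamma \to \mathrm{Sp}(2g, \setZ)$. A short computation using~\eqref{eq:8} then shows $M^\Gamma = 0$: if $m = \sum_j (x_j a_j + y_j b_j)$ is fixed by every $\Ta j$ and $\Tb j$, then $\Ta j m = m - y_j a_j$ and $\Tb j m = m + x_j b_j$, which force all coefficients to vanish. Thus $H^1(K, M)^\Gamma = M^\Gamma = 0$.

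Finally I would assemble these inputs in~\eqref{eq:30} applied to the extension above, with $Q = \Gamma_*$. Since $M^K = M$ and $H^1(K, M)^\Gamma = 0$, the sequence reads
\begin{equation*}
  0 \to H^1(\Gamma_*, M) \to H^1(\Gamma, M) \to 0,
\end{equation*}
so the inflation map—which is exactly the map induced by the quotient $\Gamma \to \Gamma_*$—is an isomorphism. I expect the only genuinely delicate point to be the structural input that the kernel of capping is generated by $\tau_\partial$, that it is central, and that it acts trivially on homology; once that is in place the cohomological bookkeeping is immediate, with the elementary but essential vanishing $M^\Gamma = 0$ providing the crux.
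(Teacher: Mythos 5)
Your proposal is correct and follows essentially the same route as the paper: cap the boundary with a marked disc, use that the kernel of $\Gamma\to\Gamma_*$ is $\langle\tau_\boundary\rangle$ acting trivially on $M$, and apply the exact sequence~\eqref{eq:30} together with $M^\Gamma = 0$ to conclude that the inflation map is an isomorphism. The only difference is that you spell out details the paper leaves implicit (the centrality of $\tau_\boundary$, the identification of the $\Gamma$-action on $H^1(K,M)$, and the explicit check that $M^\Gamma = 0$ via~\eqref{eq:8}), all of which are correct.
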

\begin{proof}
  The map $\Gamma\to \Gamma_*$ is obtained by gluing a disc with a
  marked point to the boundary of $\Sigma$ and extending a
  representative of a mapping class $f\in \Gamma$ by the
  identity. Clearly the inclusion $\Sigma\to \Sigma_*$ is an
  isomorphism on $H_1$, and this is equivariant with respect to the
  homomorphism $\Gamma\to \Gamma_*$. We identify the two homology
  groups via this isomorphism.

  Now, $\Gamma\to \Gamma_*$ is surjective, and the kernel is the
  infinite cyclic group $\langle \tau_\boundary\rangle$ generated by
  the twist in the boundary of $\Sigma$
  \cite[Proposition~3.19]{MCGprimer}. Note that $\tau_\boundary$ acts
  trivially on $M$, so $M^{\langle \tau_\boundary\rangle} = M$ and
  $H^1(\langle \tau_\boundary\rangle, M) = \Hom(\langle
  \tau_\boundary\rangle, M)$. Applying the exact
  sequence~\eqref{eq:30} we obtain
  \begin{equation}
    \label{eq:32}
    0\to H^1(\Gamma_*, M) \to H^1(\Gamma, M) \to H^1(\langle
    \tau_\boundary\rangle, M)^\Gamma = 0,
  \end{equation}
  since a homomorphism from $\langle\tau_\boundary\rangle$ is simply
  an element of $M$, and $0$ is the only $\Gamma$-invariant element of
  $H_1(\Sigma)$.
\end{proof}


\ifarXiv
\bibliographystyle{plain}
\bibliography{rvbibinfo}

\else
\printbibliography
\fi

\end{document}
